\title{Integration of first-order ODEs by Jacobi fields} 
\author{A. J. Pan-Collantes\thanks{
  Department of Mathematics, IES San Juan de Dios, Medina Sidonia, Spain; email: antonio.pan@uca.es}
        \and
        J. A. Alvarez-Garcia \thanks{
          Department of Mathematics, IES Jorge Juan, San Fernando, Spain; email: jose.alvg@gmail.com
                  }\thanks{}
        }
\documentclass{article}
\usepackage{graphicx}
\usepackage{tikz-cd}
\usepackage[utf8]{inputenc}  
\usepackage{amssymb, amsmath}
\usepackage{amsthm}
\usepackage{hyperref}
\hypersetup{
  colorlinks=true,
}
\hypersetup{
    linkcolor=magenta,
    filecolor=magenta,      
    urlcolor=cyan,
    citecolor=magenta,
    }
\usepackage{enumitem}
\usepackage[textwidth=3cm]{todonotes}
\usepackage{xcolor}



\theoremstyle{remark}

\theoremstyle{definition}

\newtheorem{proposition}{Proposition}[section]
\newtheorem{definition}{Definition}[section]
\newtheorem{example}{Example}[section]







\begin{document}

\newpage
\maketitle
\begin{abstract}
  A new class of vector fields enabling the integration of first-order ordinary differential equations (ODEs) is introduced. These vector fields are not, in general, Lie point symmetries. The results are based on a relation between 2-dimensional Riemannian manifolds and the integrability of first-order ODEs, which was established in a previous work of the authors. An integration procedure is provided, together with several examples to illustrate it. A connection between integrating factors of first-order ODEs and Schr\"odinger-type equations is highlighted.
\end{abstract}





\section{Introduction}

The classical Lie method for integrating first-order ordinary differential equations (ODEs) of the form $u'(x)=\phi(x,u)$ relies on identifying a vector field $X$ which is the infinitesimal generator of a non-trivial one-parameter group of symmetries \cite{ibragimovlibro,stephani,olver86}. These vector fields are characterized by the following condition:
\begin{equation}\label{Liesyme}
  [X,A]=\rho A,
\end{equation}
where $[,]$ denotes the Lie bracket and $A$ is the vector field associated to the ODE. Once this infinitesimal generator $X=\xi \partial_x+\eta \partial_u$ is known, the integration of the ODE is straightforward, since $(\eta-\xi \phi)^{-1}$ is an integrating factor \cite{sherring1992geometric,pancinf-struct}.

In the recent preprint \cite{pancollantes2023surfaces}, the authors have introduced a theory that connects surfaces, in the sense of 2-dimensional Riemannian manifolds, with the integration of first-order ODEs. In particular, it is demonstrated that there exist vector fields with a clear geometric meaning, and distinct from Lie point symmetries, which enable the integration of the given ODE. These vector fields arise as a modification of the notion of Jacobi field, which plays a key role in the study of Riemannian geometry, particularly in understanding the behavior of geodesics and its relation with the Gaussian curvature. 

In this paper, we introduce the concept of relative Jacobi field in Section \ref{preliminaries}, following an overview of the background and notation. Subsequently, in Section \ref{sec_integration}, we present a procedure for integrating first-order ODEs using a relative Jacobi field. We explore specific scenarios, such as surfaces with constant curvature, where the knowledge of a relative Jacobi field is obtained automatically; and surfaces with curvature dependent solely on the variable $x$, where the integration can be done in terms of a particular solution to a Schr\"odinger-type equation. Finally, in Section \ref{sec_examples} we present several illustrative examples.

\section{Relative Jacobi fields}\label{preliminaries}

In this work, we investigate the local integration of first-order ODEs of the form
\begin{equation}\label{odemaestra}
  u'(x)=\phi(x,u),
\end{equation}
where $\phi$ is a smooth function defined on an open set $U\subseteq \mathbb R^2$ which can be shrunk as required. Recall that the integration of \eqref{odemaestra} is equivalent to the finding of a first integral of the Pfaffian equation
\begin{equation}\label{pfaffianeq}
  -\phi dx+du\equiv 0,
\end{equation}
i.e., a smooth function $F\in\mathcal C^{\infty}(U)$ such that $dF=\mu(-\phi dx+du)$, for certain non-vanishing function $\mu\in\mathcal C^{\infty}(U)$ called an integrating factor for \eqref{pfaffianeq}.

Following the ideas developed in \cite{pancollantes2023surfaces}, we define in $U$ a Riemannian metric $g$ expressed in matrix form as
\begin{equation}
  \begin{pmatrix}
  1+ \phi^2 & -\phi \\
  - \phi  & 1
  \end{pmatrix}.
\end{equation}
The pair $\mathcal S=(U,g)$ defines a 2-dimensional Riemannian manifold, hereafter referred to as the \emph{surface associated with} equation \eqref{odemaestra}. 

The vector field defined on $U$ given by $A=\partial_x+\phi\partial_u$ is frequently called the associated vector field to equation \eqref{odemaestra}, and its integral curves are in correspondence with solutions to \eqref{odemaestra}. It can be checked that $g(A,A)=g(\partial_u,\partial_u)=1$ and $g(A,\partial_u)=0$, so the vector fields $A$ and $\partial_u$ constitute an orthonormal frame for the tangent bundle $TU$ \cite{pancollantes2023surfaces,Bayrakdar2018a}. 

On the surface $\mathcal S$ we consider the corresponding Levi-Civita connection $\nabla$, which is the unique torsion-free connection defined on the tangent bundle compatible with $g$ \cite{chen1999lectures,Morita,docarmoriemannian}. Given two vector fields $X,Y$ on $U$, they can be expressed in the frame $(A,\partial_u)$ as
$$
\begin{aligned}
  X&=x_1A+x_2\partial_u, \\
  Y&=y_1A+y_2\partial_u,
\end{aligned}
$$
where $x_i,y_i\in \mathcal C^{\infty}(U)$, $i=1,2$. We have that (see \cite{pancollantes2023surfaces} and references therein)
$$
\nabla_X Y=(X(y_1)-\phi_u x_2 y_2)A+(X(y_2)+\phi_u x_2 y_1)\partial_u.
$$

It is well-known that an important notion in the study of Riemannian \mbox{manifolds} is the concept of geodesic. Moreover, the study of geodesic vector fields, i.e., vector fields whose integral curves are geodesics, have attracted increasing attention in recent research \cite{Carinena2023,deshmukh19,deshmukh20}. They are characterized as follows.
\begin{definition}
  A vector field $X$ will be called a geodesic vector field if 
  $$
  \nabla_{X}X=0.
  $$
\end{definition}
Remarkably, note that in the surface $\mathcal S$ associated to equation \eqref{odemaestra},
$$
\nabla_A A=(A(1)-\phi_u\cdot 0\cdot 0)A+(A(0)+\phi_u \cdot 0\cdot 1)\partial_u=0,
$$
so $A$ is a geodesic vector field. 

The Gaussian curvature of a surface is a pivotal notion, arising in multiple mathematical and physical contexts. In the case of the surface $\mathcal{S}$ associated to equation \eqref{odemaestra}, Gaussian curvature can be calculated with the following simple formula \cite{Bayrakdar2018a,pancollantes2023surfaces}:
\begin{equation}\label{curvatureK}
  \mathcal{K}(x,u)=-\partial_u(A(\phi(x,u))).  
\end{equation}

Gaussian curvature and geodesics are related by the notion of Jacobi vector fields along geodesic curves \cite{docarmoriemannian,lee2018introduction}, which represent the infinitesimal separation between nearby geodesics. They can be thought of as measuring how geodesics spread out or come together as they move through the space. In \cite{pancollantes2023surfaces} it was introduced the following notion, deeply related to Jacobi vector fields along geodesics:
\begin{definition}
  Given a geodesic vector field $X$, a vector field $J$  will be called a Jacobi vector field relative to $X$ if the following equation holds
  \begin{equation}\label{JacobiEqGen}
    \nabla_{X} \nabla_{X} J+\mathcal{K}\left(g(X,X) J-g(J,X)X\right)=0.
    \end{equation}
\end{definition}

We now introduce two linear operators: given $h\in \mathcal C^{\infty}(U)$ we define:
$$
\begin{aligned}
  &\mathfrak S(h):=A(h)-\phi_u h,\\
  &\mathfrak T(h):=A(h)+\phi_u h.\\
\end{aligned}
  $$
These operators satisfy the following properties:
\begin{proposition}\label{operadores}
Consider the first-order ODE \eqref{odemaestra} with the corresponding Pfaffian equation \eqref{pfaffianeq}. Given a non-vanishing smooth function $h\in \mathcal C^{\infty}(U)$:
\begin{enumerate}[label=\alph*)]
  \item\label{intoperator} If $\mathfrak T (h)=0$ then $h$ is an integrating factor for \eqref{pfaffianeq}.

  \item\label{symoperator} If $\mathfrak S (h)=0$ then $h^{-1}$ is an integrating factor for \eqref{pfaffianeq}.

  \item\label{descompos} $(\mathfrak T \circ \mathfrak S)(h)=(A^2+\mathcal K)(h)$, where $\mathcal K$ is the curvature of the associated surface $\mathcal S$.
\end{enumerate}  
\end{proposition}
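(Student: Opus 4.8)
The plan is to verify the three assertions essentially by direct computation, exploiting the explicit formula for the Levi-Civita connection in the orthonormal frame $(A,\partial_u)$ given in the excerpt, together with the curvature formula \eqref{curvatureK}.

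For part \ref{intoperator}, suppose $\mathfrak{T}(h)=A(h)+\phi_u h=0$. I want to show $h$ is an integrating factor for \eqref{pfaffianeq}, i.e.\ that $d\bigl(h(-\phi\,dx+du)\bigr)=0$. Expanding, $d\bigl(-h\phi\,dx+h\,du\bigr)=\bigl(-(h\phi)_u-h_x\bigr)dx\wedge du$, so the closedness condition is $h_x+(h\phi)_u=0$, that is, $h_x+h_u\phi+h\phi_u=0$, which is exactly $A(h)+\phi_u h=0$ since $A=\partial_x+\phi\partial_u$. So part \ref{intoperator} is immediate. For part \ref{symoperator}, if $\mathfrak{S}(h)=A(h)-\phi_u h=0$, then I claim $h^{-1}$ satisfies $\mathfrak{T}(h^{-1})=0$: indeed $A(h^{-1})=-h^{-2}A(h)=-h^{-2}\phi_u h=-h^{-1}\phi_u$, so $A(h^{-1})+\phi_u h^{-1}=0$, and part \ref{intoperator} applies to $h^{-1}$.

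For part \ref{descompos}, the computation is the substantive one. I compute $\mathfrak{S}(h)=A(h)-\phi_u h$ and then apply $\mathfrak{T}$:
$$
\mathfrak{T}(\mathfrak{S}(h))=A\bigl(A(h)-\phi_u h\bigr)+\phi_u\bigl(A(h)-\phi_u h\bigr)=A^2(h)-A(\phi_u h)+\phi_u A(h)-\phi_u^2 h.
$$
Now $A(\phi_u h)=A(\phi_u)h+\phi_u A(h)$, so the two $\phi_u A(h)$ terms cancel, leaving
$$
\mathfrak{T}(\mathfrak{S}(h))=A^2(h)-A(\phi_u)h-\phi_u^2 h=A^2(h)-\bigl(A(\phi_u)+\phi_u^2\bigr)h.
$$
It then remains to identify $-(A(\phi_u)+\phi_u^2)$ with $\mathcal{K}$. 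Using \eqref{curvatureK}, $\mathcal{K}=-\partial_u(A(\phi))$. Since $A(\phi)=\phi_x+\phi\phi_u$, we get $\partial_u(A(\phi))=\phi_{xu}+\phi_u\phi_u+\phi\phi_{uu}=\phi_{xu}+\phi_u^2+\phi\phi_{uu}$. On the other hand $A(\phi_u)=\phi_{ux}+\phi\phi_{uu}$, so $A(\phi_u)+\phi_u^2=\phi_{xu}+\phi\phi_{uu}+\phi_u^2=\partial_u(A(\phi))=-\mathcal{K}$. Hence $\mathfrak{T}(\mathfrak{S}(h))=A^2(h)+\mathcal{K}h=(A^2+\mathcal{K})(h)$, as claimed.

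The only mild subtlety — and the step I would be most careful about — is the bookkeeping in part \ref{descompos}: making sure the operator $A$ is treated correctly as a derivation (so that $A(\phi_u h)$ obeys the Leibniz rule) and that the mixed partials $\phi_{xu}=\phi_{ux}$ are identified, since this is what produces the clean cancellation matching the curvature formula. Parts \ref{intoperator} and \ref{symoperator} are routine once the exactness condition $d(h\omega)=0$ is written out. I might also remark that $\mathfrak{S}$ and $\mathfrak{T}$ are, respectively, the covariant-derivative operators $h\mapsto$ (the $A$-component of $\nabla_A(hA)$ up to sign) and $h\mapsto$ (the $\partial_u$-component of $\nabla_A(h\partial_u)$), which explains conceptually why their composition reproduces the Jacobi operator $A^2+\mathcal{K}$ appearing in \eqref{JacobiEqGen} when $X=A$; but the self-contained proof is the calculation above.
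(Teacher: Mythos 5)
Your argument is correct and matches the paper's proof in all essentials: parts a) and c) are the same direct computations (the exactness condition for $h(-\phi\,dx+du)$, and the expansion of $\mathfrak T\circ\mathfrak S$ via the Leibniz rule together with the curvature formula \eqref{curvatureK}). The only differences are cosmetic: in part b) you deduce the claim from part a) through the identity $\mathfrak T(h^{-1})=-h^{-2}\,\mathfrak S(h)$, which is slightly slicker than the paper's separate computation of $d\bigl(h^{-1}(-\phi\,dx+du)\bigr)$, and in part a) your displayed coefficient of $dx\wedge du$ carries the wrong overall sign (it should be $(h\phi)_u+h_x$), which is immaterial since the condition is that it vanish.
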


\begin{proof}
  To prove part a) observe that, by Poincaré Lemma, a smooth function $h$ is an integrating factor for \eqref{pfaffianeq} if and only if
  $$
  d(-\phi h dx+h du)=(\phi_u h+\phi h_u +h_x)dx\wedge du=0,
  $$
  and therefore if and only if 
  $$
  \phi_u h+\phi h_u +h_x=(A+\phi_u) (h)=\mathfrak T (h)=0.
  $$ 
  
  For part b), note that
  $$
  d(-\phi h^{-1} dx+h^{-1} du)=\left(\frac{\phi_u}{ h}-\frac{\phi h_u}{h^2} -\frac{h_x}{h^2}\right)dx\wedge du.
  $$
  Then, $h^{-1}$ is an integrating factor if and only if 
  $$
\phi_u h-\phi h_u-h_x=-\mathfrak S (h)=0.
  $$
  
  Finally, for part c),
  $$
\begin{aligned}
  (\mathfrak T \circ \mathfrak S)(h)&=\mathfrak T(\mathfrak S(h))\\
  &=\mathfrak T(A(h)-\phi_u h)\\
  &=A(A(h)-\phi_u h)+\phi_u(A(h)-\phi_u h)\\
  &=A^2(h)-A(\phi_u h)+\phi_u A(h)-\phi_u^2 h\\ 
  &=A^2(h)-A(\phi_u) h-\phi_u A(h)+\phi_u A(h)-\phi_u^2 h\\
  &=A^2(h)-(A(\phi_u) +\phi_u^2) h\\
  &=A^2(h)-(\phi_{ux}+\phi \phi_{uu} +\phi_u^2) h\\
  &=A^2(h)-\partial_u(\phi_x+\phi \phi_u)h\\
  &=(A^2+\mathcal K)(h),
\end{aligned}
  $$
  where we have used equation \eqref{curvatureK}.
\end{proof}

In the next section we will show the relevance of these operators in the integration of a first-order ODE like \eqref{odemaestra}, once a Jacobi field relative to $A$ is known.

\section{Integration of first-order ODEs}\label{sec_integration}
Given a first-order ODE of the form \eqref{odemaestra}, the associated vector field $A$ is a geodesic vector field with $g(A,A)=1$, and therefore a Jacobi field $J$ relative to $A$ satisfies
\begin{equation}\label{JacobiEqA}
  \nabla_{A} \nabla_{A} J+\mathcal{K}\left(J-g(J,A)A\right)=0
\end{equation}
Observe that the vector fields of the form $J=(ax+b)A$, where $a,b\in \mathbb R$, trivially satisfy equation \eqref{JacobiEqA}, and thus we will refer to them as trivial Jacobi fields relative to $A$. 

Given a non-trivial Jacobi field $J$ relative to $A$, we can express it in the frame $\{A,\partial_u\}$ as
$$
J=\sigma A+\delta \partial_u,
$$
for certain smooth functions $\sigma,\delta \in \mathcal C^{\infty}(U)$.
By substitution into equation \eqref{JacobiEqA} we obtain
$$
\nabla_{A} \nabla_{A} \left(\sigma A+\delta \partial_u\right)+\mathcal{K}\delta \partial_u=0.
$$

Since $\nabla_A(A)=0$ and $\nabla_A(\partial_u)=0$ we have
$$
A^2(\sigma) A+A^2(\delta) \partial_u+\mathcal{K}\delta \partial_u=0,
$$
and thus,
\begin{align}
  A^2(\sigma) &= 0, \label{eq:sigma}\\
  A^2(\delta) +\mathcal{K}\delta &= 0. \label{eq:delta}
\end{align}

Thus, to integrate equation \eqref{odemaestra} using a non-trivial Jacobi field $J$ relative to $A$, one may follow the following steps:
\begin{enumerate}
  \item Decompose the vector field $J$ as
  $$
J=\sigma A+\delta \partial_u,
$$
for certain smooth functions $\sigma,\delta \in \mathcal C^{\infty}(U)$.

\item\label{FIF} Define $F:=A(\sigma)$, which according to equation \eqref{eq:sigma} is a first integral of the vector field $A$, and therefore a first integral of \eqref{pfaffianeq}. If $F$ is not a constant function, then equation \eqref{odemaestra} is solved.
 
\item \label{FIG} On the other hand, if $A(\sigma)=a$, where $a\in \mathbb R$, then define the function $G:=\sigma-ax$, which constitutes a first integral of equation \eqref{pfaffianeq}, since
  $$
A(G)=A(\sigma - ax)=A(\sigma)-a=0.
  $$

  If $G$ is not a constant function, the first-order ODE \eqref{odemaestra} is solved.  On the other hand, in the case where $G=\sigma-ax=b$ for some $b\in \mathbb{R}$, we have $\sigma=ax+b$. Consequently,  it is necessary that $\delta \neq 0$, as $\delta=0$ would imply that $J$ is a trivial Jacobi field relative to $A$.

\item \label{deltanozero} Assuming $\delta \neq 0$, and considering equation \eqref{eq:delta} and part \ref{descompos} in Proposition \ref{operadores}, the following relation holds
    \begin{equation}\label{compos0}
      A^2(\delta) +\mathcal{K}\delta=(\mathfrak{T} \circ \mathfrak{S})(\delta)=0.
    \end{equation}
    There are two possibilities:
    \begin{enumerate}
      \item If $\mathfrak{S}(\delta)=0$, then, as indicated in part \ref{symoperator} of Proposition \ref{operadores}, $\delta^{-1}$ serves as an integrating factor for \eqref{pfaffianeq}. Compute a smooth function $I=I(x,u)$ satisfying:
      $$
      dI=\delta^{-1}(-\phi dx+du)
      $$
      via quadrature, and equation \eqref{odemaestra} is solved.
      \item If $\mathfrak{S}(\delta)\neq 0$, part \ref{intoperator} of Proposition \ref{operadores} permits the use of $\mathfrak{S}(\delta)$ as an integrating factor for \eqref{pfaffianeq}, since $\mathfrak{T}(\mathfrak{S}(\delta))=0$ because of \eqref{compos0}. Then, the 1-form
      $$
      \mathfrak{S}(\delta)(-\phi dx+du)
      $$
      is closed, and a primitive $I=I(x,u)$ provides the solution to equation \eqref{odemaestra}.
    \end{enumerate}
  \end{enumerate}

In summary, \emph{the knowledge of a non-trivial Jacobi field relative to $A$} leads to the integration of the first-order ODE \eqref{odemaestra}.

\subsection{Constant curvature surfaces}\label{sec_constant}
If the curvature of the associated surface is constant, a non-trivial Jacobi field can always be found, and the ODE can be integrated. Suppose $\mathcal{K}=k$, with $k\in \mathbb R$. We can assume a Jacobi field relative to $A$ of the form $J=\delta(x)\partial_u$. In this case, equation \eqref{JacobiEqA} takes the particular form
$$
\nabla_{A} \nabla_{A} \delta(x)\partial_u+k\delta(x)\partial_u=0,
$$
and since $\nabla_A \partial_u=0$, the function $\delta$ must satisfy the second-order ODE
$$
\ddot{\delta}(x)+k\delta(x)=0.
$$
The general solution to this ODE is given by
\begin{equation}\label{eq:deltaKcte}
  \delta(x)=\begin{cases}
      A \cos{\sqrt{k}x}+B \sin{\sqrt{k}x}, & \text{ if } k>0,\\
      A+B x, & \text{ if } k=0,\\
      A \cosh{\sqrt{-k}x}+B \sinh{\sqrt{-k}x}, & \text{ if } k<0,
      \end{cases}
\end{equation}
with $A,B\in\mathbb{R}$, so we can choose a particular solution and proceed as in Step \ref{deltanozero} of the procedure described in Section \ref{sec_integration}.

\subsection{Relation to Schr\"odinger equation}
In the case where the curvature of the associated surface $\mathcal K=k(x)$ depends only on the variable $x$, a Jacobi field relative to $A$ could be found by solving the following Schr\"odinger-type equation:
\begin{equation}\label{SchrodingerTypeEq}
  \ddot{\delta}(x)+k(x)\delta(x)=0.
\end{equation}

Therefore, according to our procedure described in Section \ref{sec_integration}, an integrating factor for equation \eqref{odemaestra} could be expressed in terms of a particular solution of equation \eqref{SchrodingerTypeEq}. Recall that some specific values of $k(x)$ lead to solutions $\delta(x)$ expressed in terms of well-known special functions like Bessel, Mathieu, Airy or hypergeometric functions. In these cases, finding the integrating factor $\mathfrak{T}(\delta)$ provided by our method through a direct approach appears to be challenging.


Observe that in the general case, when $\mathcal K=\mathcal K(x,u)$ depends on both $x$ and $u$, our approach provides a relationship between integrating factors of first-order ODEs and the partial differential equation for $\delta=\delta(x,u)$:
\begin{equation}
  A^2(\delta)+\mathcal K \delta=0,
\end{equation}
which could be interpreted as a generalized Schr\"odinger equation.

\section{Examples}\label{sec_examples}
We will show in this section several examples to illustrate the previous results.
\begin{example}
Consider the first-order ODE given by 
\begin{equation}\label{odeej1}
  u'(x)=\frac{1}{u},
\end{equation} 
whose associated vector field is $A=\partial_x+\frac{1}{u} \partial_u$. The curvature corresponding to the surface associated to equation \eqref{odeej1} is, according to equation \eqref{curvatureK},
$$
\mathcal{K}=-\partial_u\left(A\left(\frac{1}{u}\right)\right)=-\frac{3}{u^4}.
$$

On the other hand, it can be checked that the vector field
$$
J=x \partial_x+\frac{x(u^2-x+1)}{u}\partial_u
$$
is a Jacobi field relative to $A$, since it satisfies equation \eqref{JacobiEqA}. We will use $J$ to integrate equation \eqref{odeej1}, following the Jacobi field-based method developed in Section \ref{sec_integration}. 

First, $J$ is expressed in the frame $\{A,\partial_u\}$ as
$$
J=x A+ \frac{x (u^2-x)}{u}\partial_u.
$$

Secondly, we note that the first integral presented in Step \ref{FIF}, $F:=A(x)=1$, is constant, as is the first integral described in Step \ref{FIG}, $G:=x-x=0$.

Thus, we proceed with Step \ref{deltanozero}, where $\delta=\frac{x (u^2-x)}{u}\neq 0$ satisfies
$$
(\mathfrak T \circ \mathfrak S)\left(\frac{x (u^2-x)}{u}\right)=0.
$$
Then, $\mathfrak S \left(\frac{x (u^2-x)}{u}\right)=u\neq 0$ must serve as an integrating factor for the Pfaffian equation ${-\frac{1}{u}dx+du}\equiv 0$. This is indeed the case, and it enables us to obtain, by quadrature, the first integral
$$
I(x,u)=-x+u^2.
$$
The general solution to \eqref{odeej1} is given by $-x+u^2=C$, where $C\in \mathbb R$.

It is interesting to observe that, according to the characterization given in equation \eqref{Liesyme}, $J$ is not a symmetry of equation \eqref{odeej1}, since:
$$
[J,A]=-\partial_x-\frac{u^2+1}{u}\partial_u.
$$
\end{example}

\begin{example}
  In this example, we consider the first-order ODE
  \begin{equation}\label{OdeKcte}
    u_1=e^x+\sqrt{2ue^{x}-e^{2x}  - u^2 + 1},
  \end{equation}
  with associated vector field given by
  $$
  A=\partial_x+\left(e^x+\sqrt{2ue^{x}-e^{2x}  - u^2 + 1}\right)\partial_u.
  $$
The curvature of the associated surface $\mathcal S$ is given by
  $$
\mathcal{K}=-\partial_u\left(A\left(e^x+\sqrt{2ue^{x}-e^{2x}  - u^2 + 1}\right)\right)=1.
$$

Since the curvature is constant and positive, according to equation \eqref{eq:deltaKcte} in Section \ref{sec_constant}, a Jacobi field relative to $A$ is given by
$$
J=\cos(x)\partial_u.
$$
It can be checked that this vector field is not a Lie point symmetry, by performing the corresponding commutation relation $[J,A]$.

Next, to integrate equation \eqref{OdeKcte} by means of $J$ with the procedure described in Section \ref{sec_integration}, we start at Step \ref{deltanozero}, since $\sigma=0$. In this case, $\delta=cos(x)$ and
$$
\mu:=\mathfrak S\left(\cos(x)\right)=-\sin(x)+\frac{cos(x)(u-e^x)}{\sqrt{2ue^{x}-e^{2x}  - u^2 + 1}}\neq 0.
$$
Then, $\mu$ is an integrating factor for the Pfaffian equation
$$
-\left(e^x+\sqrt{2ue^{x}-e^{2x}  - u^2 + 1}\right)dx+du\equiv 0.
$$ 
A first integral can therefore be obtained by quadrature,
$$
I(x,u)=-\arcsin(e^x-u)-x,
$$
so the general solution of equation \eqref{OdeKcte} is given in explicit form by 
$$
u(x)=e^x+\sin(x+C), \quad C\in \mathbb R.
$$
\end{example}

\begin{example}

Consider the class of first-order linear ODEs
\begin{equation}\label{odelineal}
  u'(x)=q(x)u+p(x),
\end{equation}
where $p,q$ are smooth real functions. The vector field associated with equation \eqref{odelineal} is given by
$$
A=\partial_x+(q(x)u+p(x))\partial_u.
$$
Note that the curvature of the associated surface depends only on $x$:
$$
\mathcal K=-\partial_u(A(\phi))= -q'(x) - q(x)^2.
$$
Then, to find a Jacobi field relative to $A$ we consider a particular solution of the following Schr\"odinger equation
\begin{equation}\label{SchrodingerEj}
  \ddot{\delta}(x)-\left(q'(x) + q(x)^2\right)\delta(x)=0,
\end{equation}
which allows us to construct the vector field $J=\delta(x) \partial_u$, which is orthogonal to $A$.


The reader can check that the solutions to equation \eqref{SchrodingerEj} are given by
$$
\delta(x) = C_1 \left( H(x) + C_2 \right) e^{Q(x)},
$$
where $Q$ is a smooth function satisfying $Q'(x)=q(x)$, $H$ is a smooth function satisfying $H'(x)=e^{-2Q(x)}$, and $C_1, C_2 \in \mathbb R$.

The choice $C_1=1,C_2=0$ provides the following Jacobi field relative to $A$:
\begin{equation}
  J=H(x)e^{Q(x)}   \partial_u,
\end{equation}
which can be used to integrate equation \eqref{odelineal}. We have $\sigma=0$ and $\delta=H(x)e^{Q(x)}$, so we continue in Step \ref{deltanozero}. We have that
$$
\mathfrak S\left(H(x)e^{Q(x)}\right)=A\left(H(x)e^{Q(x)}\right)-\phi_u H(x)e^{Q(x)}=e^{-Q(x)}\neq 0,
$$
and therefore, $\mu:=e^{-Q(x)}$ is an integrating factor for the Pfaffian equation
$$
-(q(x) u+p(x))dx+du \equiv 0.
$$ 
Indeed, the general solution to equation \eqref{odelineal} can be expressed as 
$$
-P(x)+u e^{-Q(x)}=C,
$$
where $C\in \mathbb R$ and $P$ is a smooth function satisfying $P'(x)=p(x)e^{-Q(x)}$.
\end{example}




\bibliographystyle{unsrt}  
\bibliography{references.bib}
\end{document}